\theoremstyle{plain}
\newtheorem*{theoremA}{{\bf Theorem A}}
\newtheorem{corollary}[subsection]{{\bf Corollary}}
\newtheorem*{corollary*}{{\bf Corollary}}
\newtheorem{proposition}[subsection]{{\bf Proposition}}
\newtheorem{lemma}[subsection]{{\bf Lemma}}
\theoremstyle{definition}
\theoremstyle{remark}
\numberwithin{equation}{subsection}
\newcounter{ithmcount}
\def\@author#1{\g@addto@macro\elsauthors{\normalsize%
    \def\baselinestretch{1}%
    \upshape\authorsep#1\unskip\textsuperscript{%
      \ifx\@fnmark\@empty\else\unskip\sep\@fnmark\let\sep=,\fi
      \ifx\@corref\@empty\else\unskip\sep\@corref\let\sep=,\fi
      }%
    \def\authorsep{\unskip,\space}%
    \global\let\@fnmark\@empty
    \global\let\@corref\@empty 
    \global\let\sep\@empty}%
    \@eadauthor={#1}
}
\begin{document}

\baselineskip=12.5pt
\title[]
{On finite groups in which the twisted conjugacy classes of the unit element are subgroups}

\author{Chiara Nicotera}
\address{Department of Mathematics, University of Salerno, Italy}
\email{cnicotera@unisa.it}
\subjclass[2020]{ 20D15, 20D45, 20E45}
\keywords{$p$-group; automorphism  ; nilpotent group }
\thanks{The author has been supported by the National Group for Algebraic and Geometric Structures, and their Applications (GNSAGA - INdAM)}
\date{}

\renewcommand{\abstractname}{Abstract}
\begin{abstract}
\noindent
We consider  groups $G$ such that the set $[G,\varphi]=\{g^{-1}g^{\varphi}|g\in G\}$ is a subgroup for every automorphism $\varphi$ of $G$, and we prove that there exists such a group $G$ that is finite and nilpotent of class $n$ for every $n\in\mathbb N$. Then there exists an infinite nonnilpotent group with the above property and the conjecture 18.14 of $[5]$ is false.
\end{abstract}

\maketitle
\section{Introduction }
\label{i}
%%%%%%%%%%%%%%%%%%%%%%%%%%%%%%%%%%%%%%%%%%%%%%%%%%%%%%%%%%%%%%%%%%%%%%%
\noindent
Let $G$ be a group and $\varphi$ be an endomorphism of $G$; we say that elements $x,y\in G$ are $\varphi$-conjugate if there exists an element $z\in G$ such that $y=z^{-1}xz^{\varphi}$. 

It is easy to check that the relation of $\varphi$-conjugation is an equivalence relation in $G$. In particular it is the usual conjugation if $\varphi=id_G$ and it is the total equivalence relation if $\varphi =0_G$ is the zero endomorphism.

Equivalence classes are called twisted conjugacy classes or  $\varphi$-conjugacy classes and their number $R(\varphi)$  is called the Reidemeister number  of the endomorphism $\varphi$.

The $\varphi$-conjugacy class $[1]_{\varphi}=\{x^{-1}x^{\varphi}|x\in G\}$ of the unit element of the group $G$ is a subset whose cardinality is equal to the index $|G:C_G(\varphi)|$ of the centralizer $C_G(\varphi)=\{x\in G|x^{\varphi}=x\}$ of $\varphi$ in $G$; in what follows we will put $[1]_{\varphi}=:[G,\varphi]$.

If $\varphi=id_G$, then $[G,\varphi]=\{1\}$ and if $\varphi=0_G$, then  $[G,\varphi]=G$ so, in these cases, $[G,\varphi]$ is a subgroup of $G$. However, in the general case, $[G,\varphi]$ is not a subgroup, it is not if we consider an automorphism $\varphi\in AutG$ and not even if this automorphism $\varphi\in InnG$ is inner. For instance, if $G=S_3$ is the symmetric group of degree $3$ and $\varphi=\bar g$ is the inner automorphism induced by $g=(123)$, then $[1]_{\bar g}=\{1, (132)\}\not\le G.$

Notice that if $\varphi\in Aut_CG$ is a central automorphism of a group $G$, that is if $g^{-1}g^{\varphi}\in Z(G)$ for every $g\in G$, then $[G,\varphi]\le G$. So, in particular, if the group $G$ is nilpotent of class $\le 2$, then $[G,\varphi]$ is a subgroup for every $\varphi\in InnG$.

As it is easy to verify, if a group $G$ is abelian, then not only for every $\varphi\in EndG$, $H=[G,\varphi]\le G$, but for every element $x\in G$ its $\varphi$-cojugacy class is equal to the coset $[x]_{\varphi}=xH$, that is $\varphi$-conjugation is a congruence in $G$. 

It is  possible to prove that this property characterizes abelian groups. In fact if $\varphi$-conjugation is a congruence for every $\varphi\in EndG$, in particular conjugation is a congruence. This implies that every conjugacy class has order $1$ that is 
$g^x=g$ for every $g,x\in G$, hence $G$ is abelian. 
Actually for a group to be abelian, it is enough that there exists an inner automorphism $\bar g\in InnG$ such that $\bar g$-conjugation is a congruence in $G$.

\begin{proposition} A group $G$ is abelian if and only if there exists $g\in G$ such that $\bar g$-conjugacy is a congruence in $G$

\end{proposition}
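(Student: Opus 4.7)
The plan is to prove both implications of the proposition. The forward implication is immediate: if $G$ is abelian, then for any $g\in G$ the inner automorphism $\bar g$ equals $\mathrm{id}_G$, so $[x]_{\bar g}=\{x\}=x\cdot\{1\}$ for every $x\in G$, and $\{1\}=[G,\bar g]$; hence each $\bar g$-class is a coset of $[G,\bar g]$, which is precisely the congruence condition.

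For the converse, suppose $\bar g$-conjugacy is a congruence for some $g\in G$, and set $H:=[G,\bar g]$. The hypothesis means that every $\bar g$-class is a left coset of $H$, and in particular all $\bar g$-classes have the same cardinality $|H|$. The decisive step is a one-line computation of the $\bar g$-class of $g$ itself: for every $z\in G$,
\[
z^{-1}\,g\,z^{\bar g}\;=\;z^{-1}\,g\,g^{-1}z\,g\;=\;g,
\]
so $[g]_{\bar g}=\{g\}$ and therefore $|H|=1$.

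Once $|H|=1$ is established, every $\bar g$-class is a singleton, i.e.\ $z^{-1}xz^{\bar g}=x$ for all $x,z\in G$. Setting $x=1$ yields $z^{\bar g}=z$ for every $z\in G$, so $\bar g=\mathrm{id}_G$ (equivalently, $g\in Z(G)$); substituting $\bar g=\mathrm{id}_G$ back into $z^{-1}xz^{\bar g}=x$ gives $z^{-1}xz=x$ for all $x,z\in G$, so $G$ is abelian.

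The only point that requires insight is choosing the right element whose class to compute: the fact that the $\bar g$-class of $g$ itself is a singleton, regardless of whether $g$ is central, is what enables the argument to go through from a single inner automorphism rather than requiring the full family $\mathrm{Inn}\,G$ as in the preceding discussion. Everything else is formal manipulation of the defining identity $z^{\bar g}=g^{-1}zg$.
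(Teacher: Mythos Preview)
Your proof is correct and follows essentially the same route as the paper: the crucial observation in both is that $[g]_{\bar g}=\{g\}$ (you spell out the computation $z^{-1}g\,z^{\bar g}=z^{-1}g\,g^{-1}zg=g$, whereas the paper simply asserts it), whence all $\bar g$-classes have size one, $g\in Z(G)$, $\bar g=\mathrm{id}_G$, and ordinary conjugacy being a congruence forces $G$ abelian.
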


\begin{proof} We have only to show that if $\bar g$-conjugacy is a congruence for an element $g\in G$, then the group is abelian.  

If $\bar g$ is a congruence, then $|[x]_{\bar g}|=|[1]_{\bar g}|=|\{h^{-1}h^g|h\in G\}|$ for every $x\in G$. In particular 
$|\{h^{-1}h^g=[h,g]|h\in G\}|=|[g]_{\bar g}|$ so $|\{[h,g]|h\in G\}|=1$ that is $\{[h,g]|h\in G\}=\{1\}$ since $[g]_{\bar g}=\{g\}$. This means that $g\in Z(G)$ hence $\bar g=id_G$, conjugacy is a congruence and so $G$ is abelian.

\end{proof}

In the paper $[1]$, the authors prove that is nilpotent every finite group $G$ in which the $\varphi$-conjugacy class of the unit element is a subgroup for every inner automorphism $\varphi\in InnG$.  Anyway it is not possible to bound the nilpotency class of such groups; in fact in $[3]$ the authors construct, for any integer $n>2$ and for any prime $p>2$ a finite $p$-group G, nilpotent of class $\ge n$ with this property. However they notice that there exist automorphisms $\phi\in AutG\setminus InnG$ such that $[G,\phi]$ is not a subgroup of $G$.

It is therefore natural to ask if it is possible to bound the nilpotency class of a finite group $G$ such that the $\varphi$-conjugacy class of the unit element is a subgroup for every $\varphi\in AutG$. In this regard, in $[1]$ the conjecture is made that such groups could be abelian (cfr. also $[5]$ 18.14). This conjecture is certainly false, in fact there exist finite non-abelian $p$-groups in which every automorphism is central (see for instance $[2]$,$[7]$). Of course such groups are nilpotent of class $2$ but we will prove the existence of nilpotent groups of every class $n$ with this property. So also the answer to the previous question is negative. 

Our main result is the following

\begin{theoremA}
 For every integer  $n\in \mathbb N$, there exists a  finite $p$-group $G$ nilpotent of class $n$ in which the $\varphi$-conjugacy class of the unit element is a subgroup for every $\varphi\in Aut G$. 
 \end{theoremA} 
 
 As we will see these groups are abelian-by-cyclic, and this result  will give  negative answer also to Problem 3 of $[3]$.
 
 Moreover from Theorem A, there follows the existence of an infinite non-nilpotent group $G$ such that $[G,\varphi]\le G$ for every $\varphi\in AutG$ and so conjecture 18.14 of [5] is false.    

\section {The proof of Theorem A.}

\noindent
Let $n$ be an integer $n\ge 2$, $p$ be an odd prime and $G=A\rtimes \langle x\rangle$, where $A=\langle a\rangle \times \langle b\rangle \simeq \mathbb Z_{p^n}\times \mathbb Z_{p^n}$, $\langle x\rangle \simeq \mathbb Z_{p^{n-1}}$ and $c^x=c^{1+p}$ for every $c\in A$.

It is easy to verify that $G'=\langle a^p\rangle \times \langle b^p\rangle$ and that $G$ is nilpotent of class $n$. Moreover, for every $c\in A$, we have that $\langle c\rangle$ is normal in $G$; in particular $\langle [g_1,g_2]\rangle$ is normal in $G$ for every
$g_1,g_2\in G$, $\langle g_1,g_2\rangle$ is a regular $p$-group for every $g_1,g_2\in G$ and so $G$ is a regular $p$-group.  ( $[4]$ III 10.2). 

Let $\varphi\in AutG$ be an automorphism of $G$, we will prove that $[1]_{\varphi}=\{g^{-1}g^{\varphi}|g\in G\}=[G,\varphi]$ is a subgroup.

First of all we have that 
$$(*)\>\>\>\> [c,\varphi]\in A\>\>\>\forall c\in A$$

In fact if there exists $c\in A$ such that $[c,\varphi]\not\in A$ then either $[a,\varphi]\not\in A$ or $[b,\varphi ]\not\in A$. W.L.O.G. we may assume that $[a,\varphi]\not\in A$ that is $[a,\varphi]=yx^{\alpha}$ with $y\in A$ and $\alpha\in\mathbb Z$ such that 
$p^{n-1}$ does not divide $ \alpha$.

So $a^{\varphi}=ayx^{\alpha}$, $(a^{\varphi})^{p^{n-1}}=(ay)^{p^{n-1}} (x^{\alpha})^{p^{n-1}}z^{p^{n-1}}$ with $z\in G'$, since $G$ is regular, and then $(a^{\varphi})^{p^{n-1}}=(ay)^{p^{n-1}}$ because $G'$ has exponent $p^{n-1}$ and, in particular, $ay$ has order
$p^n$.
Analogously $b^{\varphi}=btx^{\beta}$, where $t\in A$, $\beta\in \mathbb Z$ and $bt$ has order $p^{n}$.

Now $1=[a,b]=[a^{\varphi}, b^{\varphi}]=[ayx^{\alpha}, btx^{\beta}]=[ay, x^{\beta}]^{x^{\alpha}}[x^{\alpha}, bt]^{x^{\beta}}$ and this implies that $[ay, x^{\beta}]^{x^{\alpha}}=[bt, x^{\alpha}]^{x^{\beta}}$. Therefore there exist integer $\gamma, \delta \in\mathbb Z$ such that $(ay)^{\gamma}=(bt)^{\delta}$ because $[ay, x^{\beta}]^{x^{\alpha}}\in\langle ay\rangle$ and  $[bt, x^{\alpha}]^{x^{\beta}}\in \langle bt\rangle$.

From $\langle a\rangle\cap \langle b\rangle =\{1\}$ there follows that $\langle a^{\varphi}\rangle\cap \langle b^{\varphi}\rangle =\{1\}$, then $\langle (a^{\varphi})^{p^{n-1}}\rangle\cap \langle (b^{\varphi})^{p^{n-1}}\rangle =\langle (ay)^{p^{n-1}}\rangle\cap \langle (by)^{p^{n-1}}\rangle=\{1\}$, then $p^n$ divides both $\gamma$ and $\delta$ and so $x^{\beta}\in C_G(ay)$ and $x^{\alpha}\in C_G(bt)$. This implies that $x^{\alpha}, x^{\beta}\in A$, because $C_G(v)\subseteq A$ for every element $v\in A$ of order $p^n$.
Therefore $x^{\beta}=1=x^{\alpha}$, in particular $p^{n-1}$ divides $\alpha$ that is a contradiction. 

From $(*)$ there follows that $c^{\varphi}\in A$ for every $c\in A$, and in particular $a^\varphi\in A$. Now we prove that $x^{\varphi}=sx$ with $s\in A$ and so that $[x,\varphi]=x^{-1}x^{\varphi}=s^x=s^{1+p}\in A$.

Let $x^{\varphi}=sx^{\lambda}$ with $\lambda\in\mathbb Z$ and $s\in A$; from $a^x=a^{1+p}$ there follows that $(a^{\varphi})^{x^{\varphi}}=(a^{\varphi})^{(1+p)}$ that is $(a^{\varphi})^{x^{\lambda}}=(a^{\varphi})^{1+p}$ and 
$(a^{\varphi})^{(1+p)^{\lambda}}= (a^{\varphi})^{1+p}$. Therefore $(1+p)^{\lambda}\equiv (1+p)(mod p^n)$, $\lambda \equiv 1(mod p^n)$ and so $x^{\varphi}=sx$.

Since $[x,\varphi ] \in A$, for every $\alpha\in\mathbb Z$, we have that $[x^{\alpha},\varphi]=[x,\varphi]^{\beta}$ with $\beta\in\mathbb Z$ that is $[x^{\alpha},\varphi]\in \langle [x,\varphi]\rangle$ for every $\alpha \in \mathbb Z$ and so 
$V=\{[x^{\alpha},\varphi]|\alpha\in\mathbb Z\}\subseteq \langle [x,\varphi]\rangle$. 

We prove that conversely for every $\xi\in\mathbb Z$ there exists $\delta\in\mathbb Z$ such that $[x,\varphi]^{\xi}=[x^{\delta}, \varphi]$ and so $V=\langle [x,\varphi]\rangle$.

Put $[x,\varphi]=c$ that is $x^{\varphi}=xc$; we have that $c^{p^{n-1}}=1$ then the order $o(c)$ divides $p^{n-1}$. Now if $|V|=|\langle x\rangle:C_{\langle x\rangle}(\varphi)|=p^k$, then $(x^{\varphi})^{p^{k}}=(x^{p^{k}})^{\varphi}=x^{p^k}$ that is $(xc)^{p^{k}}=x^{p^{k}}$. So,  $x^{p^{k}}=x^{p^{k}}c^{p^{k}}z^{p^{k}}$ with $z\in (\langle xc\rangle)'=\langle c^p\rangle$ that is $x^{p^{k}}=x^{p^{k}}c^{p^{k}}(c^{lp})^{p^{k}}=x^{p^{k}}c^{(1+lp)p^{k}}$ that implies $c^{(1+lp)p^{k}}=1$ and so the order $o(c)=p^{n-1}$ divides $p^k$.

Then $p^k=|V|\le |\langle c\rangle|\le p^k$ and $V=\langle c\rangle$ since $|V|=|\langle c\rangle|$.

Let $B=\{[c,\varphi]|c\in A\}$, from $(*)$ we have that $B\subseteq A$ and we prove that $B\le A$. In fact, for every $c,d\in A$ we get $[cd,\varphi ]=[c,\varphi]^d[d,\varphi]=[c,\varphi][d,\varphi]$ and $[c^{-1},\varphi]=[c,\varphi]^{-1}$.
Moreover $\langle [x,\varphi]\rangle\le A$ since $[x,\varphi]\in A$. Then, $B\langle [x,\varphi]\rangle\le A$ and we show that $B\langle [x,\varphi]\rangle=[G,\varphi]$ so in particular $[G,\varphi]\le G$.

For every $g\in G$ there exist $f\in A$ and $\alpha \in \mathbb Z$ such that $g=fx^{\alpha}$; then $[g,\varphi]=[fx^{\alpha}, \varphi]=[f,\varphi]^{x^{\alpha}}[x^{\alpha}, \varphi]=[f,\varphi]^{\xi}[x,\varphi]^{\eta}$ with $\xi,\eta\in \mathbb Z$ and so $[g,\varphi ]\in B\langle [x,\varphi]\rangle$.
Conversely if we consider $[d,\varphi][x,\varphi]^{\eta}$ with $d\in A$, then we have that there exists $\xi\in\mathbb Z$ such that  $[d,\varphi][x,\varphi]^{\eta}= [d,\varphi][x^{\xi},\varphi]$ since there exists $\xi\in\mathbb Z$ such that $[x,\varphi]^{\eta}=[x^{\xi},\varphi]$. Let $\beta\in\mathbb Z$ such that $(1+p)^{\xi}\beta\equiv 1(mod p^n)$, then 
$[d^{\beta}x^{\xi},\varphi]=[d,\varphi]^{\beta x^{\xi}}[x^{\xi},\varphi]=[d, \varphi ]^{(1+p)^{\xi}\beta}[x^{\xi},\varphi]=[d,\varphi][x,\varphi]^{\eta}$ that is $[d,\varphi][x,\varphi]^{\eta}\in [G,\varphi]$ and this completes the proof.

\section { Further remarks and open questions}

\noindent
From Theorem A there follows that for every $n\in\mathbb N$ and for every odd prime $p$, there exists a $p$-group $P=G(n,p)$ nilpotent of class $n$ such that $[P,\varphi]\le P$ and so it is possible to construct an infinite non-nilpotent group with the same property.  

\begin{corollary}
There exists an infinite non-nilpotent group $G$ such that $[G,\varphi]\le G$ for every $\varphi\in Aut G$.
\end{corollary}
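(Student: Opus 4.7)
The plan is to build the infinite example as a restricted direct product of the $p$-groups provided by Theorem A, using distinct primes so that the primary components are characteristic and no automorphism can mix them.

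Concretely, I would fix a sequence $p_2, p_3, p_4, \dots$ of pairwise distinct odd primes, and for each integer $n\ge 2$ invoke Theorem A to obtain a finite $p_n$-group $G_n=G(n,p_n)$ that is nilpotent of class exactly $n$ and satisfies $[G_n,\psi]\le G_n$ for every $\psi\in\mathrm{Aut}\,G_n$. Then I would set
\[
G=\bigoplus_{n\ge 2}G_n,
\]
the restricted direct product (elements of finite support). This $G$ is manifestly infinite, and it cannot be nilpotent, because if it were of class $c$ then each direct summand $G_n$ would have class at most $c$, contradicting the choice of $G_n$ as class $n$ for arbitrarily large $n$.

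The key structural observation is that each $G_n$ is a characteristic subgroup of $G$: since $G_n$ is exactly the $p_n$-primary component of $G$ and the primes $p_n$ are pairwise distinct, $G_n$ coincides with the set of elements whose order is a power of $p_n$, hence it is preserved by every $\varphi\in\mathrm{Aut}\,G$. Consequently every automorphism of $G$ decomposes as $\varphi=\bigoplus_{n\ge 2}\varphi_n$ with $\varphi_n\in\mathrm{Aut}\,G_n$, and for any $g=(g_n)_{n}\in G$ one has $g^{-1}g^{\varphi}=(g_n^{-1}g_n^{\varphi_n})_n$. Therefore
\[
[G,\varphi]=\bigoplus_{n\ge 2}[G_n,\varphi_n],
\]
and since each $[G_n,\varphi_n]$ is a subgroup of $G_n$ by Theorem A, the right-hand side is a subgroup of $G$, as required.

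The only real point that needs care is the characteristicness of the summands and the resulting block-diagonal form of an arbitrary automorphism; once that is in place the rest is a formal verification. I do not anticipate a serious obstacle, since the coprimality of the component orders makes the primary decomposition rigid and forces the decomposition $\varphi=\bigoplus\varphi_n$; the conclusion that $[G,\varphi]$ is a subgroup then reduces immediately to the finite case handled by Theorem A.
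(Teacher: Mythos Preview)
Your proposal is correct and follows essentially the same route as the paper: take the restricted direct product of the groups $G(n,p_n)$ from Theorem~A over pairwise distinct primes, use that each factor is its own primary component and hence characteristic, deduce that every automorphism acts coordinatewise, and conclude $[G,\varphi]=\bigoplus_n[G_n,\varphi_n]$ is a subgroup. The paper carries out the same argument, writing the last identity element by element rather than in the compact $\bigoplus$ notation you use.
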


\begin{proof} For every $n\in\mathbb N$ fix a prime $p_n$ such that $p_n\not=p_m$ for every $m<n$ and put $P_n:=G(p_n,n)$; the group $G:=Dir_{n\in N}P_n$ is non-nilpotent and $[G,\varphi]\le G$ for every $\varphi\in AutG$. In fact, for every $\varphi\in G$ and for every $n\in \mathbb N$ we have that the restriction $\varphi _{/P_n}=:\varphi_{n}\in Aut P_n$, so $[P_n,\varphi_n]$ is a normal subgroup of $P_n$ and then it is a normal subgroup of $G$. We will show that $[G,\varphi]=\langle [P_n,\varphi_n]|n\in\mathbb N\rangle $ so, in particular, it is a subgroup.  

Let $g\in G$, then $g=g_{n_1}\dots g_{n_t}$ with $t, n_1,\dots n_t\in \mathbb N$ and $g_{n_i} \in P_{n_i}$ for every $i\in\{1,\dots t\}$.

So 
$[g,\varphi]= [g_{n_1}\dots g_{n_t},\varphi]= g_{n_t}^{-1}\dots g_{n_1}^{-1} (g_{n_1}\dots g_{n_t})^{\varphi}=
g_{n_1}^{-1}g_{n_1}^{\varphi_{n_1}}\dots g_{n_t}^{-1}g_{n_t}^{\varphi_{n_t}}=[g_{n_1},\varphi_{n_1}]\dots [g_{n_t},\varphi_{n_t}]$
and $[G,\varphi]\subseteq \langle [P_n,\varphi_n]|n\in \mathbb N\rangle$.

Conversely if $y\in  \langle [P_n,\varphi_n]|n\in \mathbb N\rangle$, then $y=y_{n_1}\dots y_{n_k}$ for some $k, n_1,\dots n_k\in \mathbb N$ and  $y_{n_i}=[z_{n_i}, \varphi_{n_i}]\in [P_{n_i}\varphi_{n_i}]$ and it is easy to see that
$y=[z_{n_1}\dots z_{n_t}, \varphi]\in [G,\varphi].$
\end{proof}

If $n=2$ Theorem A gives an example of abelian-by-(cyclic of order $p$)  $p$-group with the property for every odd prime $p$ that is a conterexamples to Problem 3 of $[3]$ for every odd prime.  

By Theorem A we can find  $p$-groups of class $n\ge 2$ with the property for odd primes only.  

What's about $2$-groups? There is a $2$-group of class $2$ with the property (see $[2]$), is it possible to have such a $2$ -group of class $>2$?

Notice that if we consider the $2$-group $G=A\rtimes \langle x\rangle$, with $A=\langle a\rangle \times \langle b\rangle\simeq \mathbb Z_{2^n}\times \mathbb Z_{2^n}$, $\langle x\rangle\simeq \mathbb Z_{2^{n-1}}$, $n>1$,  and $c^x=c^3$ for every $c\in A$, then there exists $\varphi\in AutG$ such that $[G,\varphi]\not\le G$.  

In order to define such an automorphism, notice that  for every $c\in A$ and for every $t\ge 1$, we have that $(xc)^{2^t}=x^{2^t}c^{2^{t+1}h}$ with $h\in\mathbb N$. For, if $t=1$, then $(xc)^2=xcxc=x^2c^xc=x^2c^4$; suppose that for some $t\in\mathbb N$ we have
$(xc)^{2^t}=x^{2^t}c^{2^{t+1}h}$, then $(xc)^{2^{t+1}}=((xc)^{2^{t}})^2=(x^{2^t}c^{2^{t+1}h})(x^{2^t}c^{2^{t+1}h})=x^{2^{t+1}}(c^{2^{t+1}h})^{x^{2^t}}(c^{2^{t+1}h})=x^{2^{t+1}}(c^{2^{t+1}h})^{3^{2^t}}c^{2^{t+1}h}=x^{2^{t+1}}c^{2^{t+1}h(3^{2^t}+1)}=x^{2^{t+1}}c^{2^{t+1}h(3^{2^t}+1)}= x^{2^{t+1}}c^{2^{t+2}k}$ since $3^{2^t}+1$ is even. 

Therefore if $c\in A$ has order $2^n$,we have that $xc$ has order $2^{n-1}$ and we may consider $\varphi\in Aut G$ such that $y^{\varphi}=y$ for every $y\in A$ and $x^{\varphi}=xc$. We have that $[x,\varphi]\in A$ and $[x^{\alpha},\varphi]\in A$ for every $\alpha\in \mathbb Z$. Therefore 
$[G,\varphi]=[\langle x\rangle, \varphi]$ because for every $g=x^{\alpha}y\in G$ with $\alpha\in Z$ and $y\in A$ we have that $[g,\varphi]=[x^{\alpha}y, \varphi]=[x^{\alpha}, \varphi]\in [\langle x\rangle, \varphi]$.
This implies that $|[G,\varphi]|=|[\langle x\rangle, \varphi]|=|\langle x\rangle :C_{\langle x\rangle}(\varphi)|\le 2^{n-1}$. Now $x^{-1}x^{\varphi}=c\in [G,\varphi]$, so $[G,\varphi]\not\le G$ since $c$ has order $2^{n}$.

Also every dihedral $2$-group $G=D_{2^n}=A\rtimes \langle x\rangle$ with $A=\langle a\rangle\simeq \mathbb Z_{2^{n-1}}$ and $\langle x\rangle \simeq \mathbb Z_2$ $(n>2)$, has an automorphism $\varphi\in Aut G$ such that $[G,\varphi]\not\le G$. 
In fact, if we consider the automorphism $\varphi$ defined by $a^\varphi=a$ and $x^{\varphi}=xa^{-1}$ then we have that $[G,\varphi]=[\langle x\rangle, \varphi]$ therefore $|[G,\varphi]| \le 2$ but $a^{-1}=x^{-1}x^{\varphi}\in [G,\varphi]$ then it is not a subgroup because $a^{-1}$ has order $2^{n-1}>2$.

Notice that, in particular if $n=3$ that is $G=D_8$, then $\varphi_{/Z(G)}=id_{Z(G)}$ because $[a,x]^{\varphi}=[a,xa^{-1}]=[a,x]$. Also the quaternion group $Q_8=\{1,-1,i,j,k,-i,-j,-k\}$ has an automorphism $\varphi$ such that $[Q_8,\varphi]\not\le Q_8$ and
$\varphi_{/Z(Q_8)}= id_{{Z(Q_8)}}$ actually the automorphism defined by $i^{\varphi}=i$ and $j^{\varphi}=k$, in this case $[Q_8,\varphi]=\{1,-i\}$.

Indeed it is possible to prove the following result:
\begin{proposition} For every prime $p$, if $G$ is an extraspecial $p$-group, then there exists $\varphi\in AutG$ such that $[G,\varphi]\not\le G$.
\end{proposition}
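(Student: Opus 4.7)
The approach will be to reduce to extraspecial groups of order $p^3$ via the central-product decomposition, and then exhibit a bad automorphism in each of the four possible isomorphism types.

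For the reduction, recall that every extraspecial $p$-group of order $p^{2n+1}$ with $n \ge 2$ is a central product $G = G_1 * G_2$ with $G_1$ extraspecial of order $p^3$. If $\varphi_1 \in \Aut(G_1)$ fixes $Z(G_1) = Z(G_2)$ pointwise, then the pair $(\varphi_1,\operatorname{id}_{G_2})$ is compatible on the identified center and hence induces an automorphism $\varphi \in \Aut(G)$. Using $[G_1,G_2] = 1$, one checks $[g_1 g_2,\varphi] = [g_1,\varphi_1]$, so $[G,\varphi] = [G_1,\varphi_1]$ as a subset of $G_1 \subseteq G$; any failure of closure in $G_1$ therefore persists in $G$. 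It thus suffices to handle the four isomorphism types of order $p^3$.

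For $p=2$ the automorphisms of $D_8$ and $Q_8$ exhibited in the remarks immediately preceding this statement do the job, and both were noted to fix the center pointwise. For $p$ odd and $G$ the Heisenberg group $\langle a,b,c \mid a^p=b^p=c^p=1,\ c=[a,b]\in Z(G)\rangle$, the proposed automorphism is $\varphi(a)=a$, $\varphi(b)=ab$: the identity $(ab)^p = a^p b^p [b,a]^{\binom{p}{2}} = 1$ (which uses $p\mid\binom{p}{2}$ for odd $p$) together with $[a,ab]=c$ shows that $\varphi\in\Aut G$ and fixes $Z(G)$. A class-$2$ collection computation then yields $[a^i b^j c^k,\varphi] = a^j c^{j(j+1)/2}$, a set of $p$ elements containing $ac$ but not $(ac)^2 = a^2 c^2$ (the only element of the set with $a$-exponent $2$ is $a^2 c^3$, and $3\not\equiv 2\pmod p$). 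For $p$ odd and $G=\langle a,b \mid a^{p^2}=b^p=1,\ b^{-1}ab=a^{1+p}\rangle$ of exponent $p^2$, the proposed automorphism will be $\varphi(a)=ab$, $\varphi(b)=b$. The key identity $(ab)^p=a^p$ (which uses $p^2\mid p\binom{p}{2}$ for odd $p$) shows $\varphi\in\Aut G$ and fixes $Z(G)=\langle a^p\rangle$, and then the collection identity $b^j a^i = a^i b^j a^{-ijp}$ leads to $[a^i b^j,\varphi] = a^{-p\binom{i}{2}} b^i$, a set containing $b$ but not $b\cdot b = b^2$ (the candidate with $b$-part $b^2$ would have to be $a^{-p}b^2$).

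The main obstacle is organizational rather than computational: each candidate $\varphi_1$ must be chosen to be the identity on $Z(G_1)$, since otherwise the glueing step in the reduction fails. The four choices above are made precisely to meet this constraint; granting it, each closure failure follows from a short class-$2$ commutator calculation.
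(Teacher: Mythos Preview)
Your proposal is correct and follows essentially the same route as the paper: reduce via the central-product decomposition (the paper's Lemma~3.3) to non-abelian groups of order $p^3$, then handle each of the four isomorphism types explicitly (the paper's Lemma~3.4). Your choices of automorphism differ only cosmetically from the paper's (you send the second generator to $ab$ where the paper sends it to $ba$, a central twist), and your verification that the resulting $p$-element sets are not closed under multiplication is actually more explicit than the paper's, which simply asserts non-closure.
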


In order to show this proposition, first of all we prove the following lemmas

\begin{lemma} Suppose that a group $G$ is the central product of two subgroup $H$ and $K$; if there exists an automorphism $\phi\in Aut H$ such that $z^{\phi}=z$ for every $z\in Z(G)$ and $[H,\phi]\not\le H$, then there exists also $\varphi\in AutG$ such that 
$[G,\varphi]\not\le G$.
\end{lemma}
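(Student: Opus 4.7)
My plan is to extend the given automorphism $\phi\in AutH$ to an automorphism $\varphi\in AutG$ that acts trivially on the complementary factor $K$, and then to observe that the twisted conjugacy class $[G,\varphi]$ coincides as a set with $[H,\phi]$. Since the latter fails to be a subgroup by hypothesis, the former will fail as well.

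Concretely, I would set $\varphi(hk):=h^{\phi}k$ for $h\in H$ and $k\in K$. The main obstacle, and the place where the hypothesis $z^{\phi}=z$ for all $z\in Z(G)$ is crucial, is well-definedness: an element of $G$ can have two central-product representations $h_1k_1=h_2k_2$, and then $h_2^{-1}h_1=k_2k_1^{-1}$ lies in $H\cap K\subseteq Z(G)$. Because $\phi$ fixes $Z(G)$ pointwise, $(h_2^{-1}h_1)^{\phi}=h_2^{-1}h_1$, and rearranging gives $h_1^{\phi}k_1=h_2^{\phi}k_2$, so $\varphi$ is indeed well-defined. That $\varphi$ is a homomorphism then follows at once from $[H,K]=1$, and applying the same recipe to $\phi^{-1}$ yields a two-sided inverse, so $\varphi\in AutG$.

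Next I would compute, for $g=hk\in G$,
$$g^{-1}g^{\varphi}=k^{-1}h^{-1}h^{\phi}k=k^{-1}[h,\phi]k=[h,\phi],$$
where the last equality uses $[h,\phi]\in H$ together with $[H,K]=1$. Hence $[G,\varphi]=\{[h,\phi]\mid h\in H\}=[H,\phi]$ as subsets of $G$. A subset of $H$ is a subgroup of $G$ if and only if it is a subgroup of $H$, so the assumption $[H,\phi]\not\le H$ immediately yields $[G,\varphi]\not\le G$, completing the proof.
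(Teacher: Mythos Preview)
Your proof is correct and follows essentially the same route as the paper: you extend $\phi$ to $\varphi\in\Aut G$ by $\varphi(hk)=h^{\phi}k$, use the hypothesis that $\phi$ fixes $Z(G)$ pointwise to verify well-definedness, and then compute $[G,\varphi]=[H,\phi]$ via $[H,K]=1$. The only cosmetic difference is that you spell out the inverse and the final ``subgroup of $H$ iff subgroup of $G$'' step a bit more explicitly than the paper does.
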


\begin{proof} The group $G$ is central product of $H$ and $K$ that is $G=HK$, $[H,K]=1$ and $H\cap K=Z(G)$. This implies that for every $g\in G$ we have that $g=hk$ with $h\in H$ and $k\in K$.
Let $\phi\in AutH$; if the restriction $\phi_{/Z(G)}=id_{/Z(G)}$, then the position $g^{\varphi}=(hk)^{\varphi}=h^{\phi}k$ defines a map $\varphi:G\rightarrow G$. In fact $hk=h_1k_1$ with $h,h_1\in H$ and $k,k_1\in K$ if and only if $h_1^{-1}h=k_1k^{-1}\in Z(G)$, so
$(h_1^{-1})^{\phi}h^{\phi}=(h_1^{-1}h)^{\phi}=(k_1k^{-1})^{\phi}=k_1k^{-1}$ that is $h^{\phi}k=h_1^{\phi}k_1$. It is easy to check that $\varphi\in AutG$, moreover $[G,\phi]=\{g^{-1}g^{\varphi}|g\in G\}=\{k^{-1}h^{-1}h^{\phi}k|h\in H, k\in K\}=\{h^{-1}h^{\phi}|h\in H\}=[H,\phi].$
\end{proof}

\begin{lemma} Let $p$ be a prime, and $G$ be a group of order $|G|=p^3$; if $G$ is non-abelian then there exists $\varphi\in AutG$ such that $z^{\varphi}=z$ for every $z\in Z(G)$ and  $[G,\varphi]\not\le G$.
\end{lemma}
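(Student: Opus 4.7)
I would proceed by classifying the non-abelian groups of order $p^3$ and constructing $\varphi$ case by case. For $p=2$ the groups are $D_8$ and $Q_8$, and the automorphisms already exhibited in the discussion preceding this lemma satisfy both conditions, so no new work is needed. I therefore may assume $p$ is odd, so that $G$ is either the Heisenberg group
\[
H_p=\langle a,b,c\mid a^p=b^p=c^p=1,\ [a,b]=c,\ [a,c]=[b,c]=1\rangle
\]
of exponent $p$, or the metacyclic group
\[
M_p=\langle a,x\mid a^{p^2}=x^p=1,\ a^x=a^{1+p}\rangle
\]
of exponent $p^2$; in both cases $|Z(G)|=p$, so fixing a generator of $Z(G)$ fixes all of $Z(G)$.

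For $G=H_p$ I would set $\varphi(a)=ab$, $\varphi(b)=b$, which forces $\varphi(c)=c$ and so fixes $Z(G)=\langle c\rangle$ pointwise. To check $\varphi$ extends to an automorphism one needs only that $(ab)^p=1$, which follows from the class-$2$ collection identity $(ab)^p=a^pb^pc^{-\binom{p}{2}}$ and the divisibility $p\mid\binom{p}{2}$ (valid since $p$ is odd). The same identity then yields, for every $g=a^ib^jc^k$,
\[
[g,\varphi]=g^{-1}\varphi(g)=b^i\,c^{-\binom{i}{2}},
\]
so $[G,\varphi]$ has exactly $p$ elements. The product of two of them is $b^{i_1+i_2}c^{-\binom{i_1}{2}-\binom{i_2}{2}}$, while closure in $[G,\varphi]$ would force the $c$-exponent to be $-\binom{i_1+i_2}{2}$; the discrepancy $\binom{i_1+i_2}{2}-\binom{i_1}{2}-\binom{i_2}{2}=i_1i_2$ is nonzero modulo $p$ as soon as $i_1,i_2\not\equiv 0$.

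For $G=M_p$ I would take the analogous $\varphi(a)=ax$, $\varphi(x)=x$. Using $[a,x]=a^p$ together with the collection formula one obtains $(ax)^p=a^p$, whence $(ax)^{p^2}=1$ and $(ax)^{1+p}=(ax)\cdot a^p=a^{1+p}x=x^{-1}(ax)x$, confirming that $\varphi$ is an automorphism; moreover $\varphi(a^p)=a^p$ fixes $Z(G)=\langle a^p\rangle$ pointwise. A parallel calculation gives $[a^jx^i,\varphi]=a^{-p\binom{j}{2}}x^j$, so $[G,\varphi]$ again has $p$ elements and fails to be closed by the same identity $\binom{j_1+j_2}{2}-\binom{j_1}{2}-\binom{j_2}{2}=j_1j_2$. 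The only genuine obstacle is bookkeeping: one must pick $\varphi$ so that $Z(G)$ is fixed pointwise (this is why the relation involving only the untouched generator gives no new constraint) and then carefully track the $\binom{\,\cdot\,}{2}$ correction coming from collection. Once this is in hand, the failure of the subgroup property has exactly the same structural cause in both cases.
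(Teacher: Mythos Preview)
Your proposal is correct and follows essentially the same approach as the paper: reduce $p=2$ to the $D_8$ and $Q_8$ cases already treated, then for odd $p$ classify into the Heisenberg and metacyclic groups, in each case define $\varphi$ by fixing one generator and multiplying the other by it, and compute $[G,\varphi]$ as a $p$-element set indexed by a parameter with a $\binom{\cdot}{2}$ correction that obstructs closure. Your write-up is in fact tidier than the paper's in the metacyclic case, since you make the obstruction $\binom{j_1+j_2}{2}-\binom{j_1}{2}-\binom{j_2}{2}=j_1j_2$ explicit and uniform across both cases; the only small point to add is the one-line check that $a^{-p\binom{j}{2}}x^{j}$ depends only on $j\bmod p$ (which follows from $\binom{j+p}{2}\equiv\binom{j}{2}\pmod p$ for $p$ odd), so that $[G,\varphi]$ really has exactly $p$ elements.
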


\begin{proof} If $p=2$ then either $G\simeq D_8$ or $G\simeq Q_8$ and the claim is true. 

So we may suppose that $p$ is odd and in this case we have that either $G=\langle x,y,z|x^p=y^p=z^p=1, [x,z]=[y,z]=1,[x,y]=z\rangle$ that is $G=H\rtimes\langle y\rangle$ with $H=\langle x,z\rangle \simeq \mathbb Z_p\times\mathbb Z_p$, $\langle y\rangle\simeq \mathbb Z_p$ and $x^y=xz$, $z^y=z$, or $G=\langle x,y|x^{p^2}=y^p=1, x^y=x^{1+p}\rangle$ that is $G=\langle x\rangle \rtimes \langle y\rangle$ with $x^{p^2}=1$, $y^p=1$ and $x^y=x^{1+p}$.

In the first case we may consider $\varphi\in Aut G$ defined by $x^{\varphi}=x$ and $y^{\varphi}=yx$ and we have that $z^{\varphi}=[x,y]^{\varphi}=[x,yx]=[x,y]=z$ that is $\varphi_{/Z(G)}=id_{Z(G)}$. Moreover if we consider 
$g=y^tx^nz^m\in G$ with $0\le t,n,m\le p-1$, then we have that $g^{\varphi}=(yx)^tx^ny^m=y^tx^tz^{{t(t-1)\over 2}}x^ny^m= y^tx^{t+n}z^{{t(t-1)\over 2}+m}$ e quindi $g^{-1}g^{\varphi}=(y^{-t}x^{-n}z^{nt-m})(y^tx^{t+n}z^{{t(t-1)\over 2}+m})= x^tz^{{t(t-1)\over 2}}$ with $0\le t\le p-1$. 

Hence $[G,\varphi]=\{x^tz^{{t(t-1)\over 2}}|0\le t\le p-1\}$ and this set is not a subgroup of $G$.

\smallskip

In the second case we may consider the automorphism $\phi\in Aut G$ defined by  $\phi (y)=y$ e $\phi (x)=yx$; since $\phi([x,y])=[yx,y]=[x,y]$, we have that $\phi_{/Z(G)}=id_{Z(G)}$. Moreover if we consider $g=y^nx^m\in G$ with $0\le n\le p-1$ and $0\le m\le p^2-1$, then we have that $\phi(g)=y^{n+m}x^{m(1+p{(m-1)\over 2})}$.  Therefore $g^{-1}\phi(g)=y^mx^{m(1-(1+p)^m+p{(m-1)\over 2})}$ and the set of these elements, with  $0\le m\le p^2-1$  $0$ is not a subgroup of $G$.

\end{proof}

For every prime $p$, an extra-special $p$-group is the iterated central product of non-abelian groups of order $p^3$ (see for instance Lemma 2.2.9 of [6]), then from the two previous lemmas there follows Proposition 3.2

\end{document}